\documentclass[12pt]{amsart}

\usepackage[a4paper,margin=1in]{geometry}
\usepackage[T1]{fontenc}
\usepackage{amsmath,amssymb,amsthm,mathtools}
\usepackage{microtype}
\usepackage{xcolor}
\definecolor{linkblue}{RGB}{0,82,155}
\usepackage[
  colorlinks=true,
  linkcolor=linkblue,
  citecolor=linkblue,
  urlcolor=linkblue
]{hyperref}
\usepackage[nameinlink,capitalise,noabbrev]{cleveref}
\usepackage{enumitem}

\crefname{theorem}{Theorem}{Theorems}
\Crefname{theorem}{Theorem}{Theorems}

\crefname{lemma}{Lemma}{Lemmas}
\Crefname{lemma}{Lemma}{Lemmas}

\crefname{corollary}{Corollary}{Corollaries}
\Crefname{corollary}{Corollary}{Corollaries}

\crefname{proposition}{Proposition}{Propositions}
\Crefname{proposition}{Proposition}{Propositions}

\crefname{remark}{Remark}{Remarks}
\Crefname{remark}{Remark}{Remarks}

\crefname{definition}{Definition}{Definitions}
\Crefname{definition}{Definition}{Definitions}

\selectfont
\setlist{itemsep=0.25em,topsep=0.4em}

\usepackage{aliascnt}

\newtheorem{theorem}{Theorem}[section]

\newaliascnt{lemma}{theorem}
\newtheorem{lemma}[lemma]{Lemma}
\aliascntresetthe{lemma}

\newaliascnt{corollary}{theorem}

\aliascntresetthe{corollary}

\newaliascnt{proposition}{theorem}
\newtheorem{proposition}[proposition]{Proposition}
\aliascntresetthe{proposition}

\newtheoremstyle{myremark}%
  {6pt}%
  {6pt}%
  {\normalfont}%
  {}%
  {\bfseries}%
  {.}%
  {.5em}%
  {}

\theoremstyle{myremark}
\newaliascnt{remark}{theorem}
\newtheorem{remark}[remark]{Remark}
\aliascntresetthe{remark}

\newaliascnt{definition}{theorem}
\newtheorem{definition}[definition]{Definition}
\aliascntresetthe{definition}

\newcommand{\set}[1]{\left\{#1\right\}}
\newcommand{\abs}[1]{\left|#1\right|}
\newcommand{\floor}[1]{\left\lfloor #1 \right\rfloor}
\newcommand{\ceil}[1]{\left\lceil #1 \right\rceil}

\title{Pseudoshattering Pairs}

\author{Noga Alon}
\address{Department of Mathematics, Princeton University, 
Princeton, New Jersey, USA}
\email{nalon@math.princeton.edu}
\thanks{Research supported in part by
NSF grants DMS-2154082 and DMS-2349013}
\author{Varun Sivashankar}
\address{Department of Mathematics, Princeton University, 
Princeton, New Jersey, USA}
\email{varunsiva@princeton.edu}
\date{}

\begin{document}
\begin{abstract}
For two vectors $x,y\in [b]^k$, consider the bipartite graph with two copies of
$[b]$ in which $i$ on the left is joined to $j$ on the right if
$(x_t,y_t)=(i,j)$ for some coordinate $t$.  We study the largest size of a family
$C\subseteq [b]^k$ such that, for every two distinct $x,y\in C$, this bipartite
graph contains a cycle.

We give a natural construction for such families and conjecture that it is
optimal whenever $k$ is large relative to $b$.  We prove an LYM-type upper bound
that is asymptotically tight with respect to this construction, and is exact
when $k$ is large and divisible by $b$.  We then refine the argument using a
circular ordering, obtaining the sharp full-support bound when
$k\equiv -1\pmod b$.  In the case $b=3$, we prove the exact general result when
$k\equiv -1\pmod 3$ and $k$ is sufficiently large.  The problem is motivated by
the Daniely--Shalev-Shwartz dimension and the pseudocube formulation of a
higher-alphabet Sauer--Shelah--Perles lemma.
\end{abstract}

\maketitle

\section{Introduction}

Let $[m]=\{1,\dots,m\}$. For a family $\mathcal{F}\subseteq 2^{[n]}$, a set $S\subseteq [n]$ is shattered by $\mathcal{F}$ if
\[
\set{A\cap S:A\in\mathcal{F}}=2^S.
\]
Equivalently, if one identifies $\mathcal{F}$ with a binary matrix whose rows are indicator vectors,
then a set of columns is shattered if the rows of the
corresponding submatrix contain every binary pattern.
The Sauer--Shelah lemma \cite{Sauer,Shelah}, in the equivalent formulation due to Pajor \cite{Pajor},
states that $\mathcal{F}$ shatters at least $\abs{\mathcal{F}}$ sets.

Recently Hanneke, Meng, Moran, and Shaeiri \cite{HannekeMengMoranShaeiri} introduced a
$b$-ary generalization based on pseudocubes.  In this paper we focus on the maximum number of \emph{pairs} of columns that are pseudo-shattered.
The relevant combinatorial parameter in \cite{HannekeMengMoranShaeiri} is the
Daniely--Shalev-Shwartz dimension, introduced by Daniely and Shalev-Shwartz
\cite{DanielyShalevShwartz}.
In the classical binary setting, the dual extremal problem of maximizing the number of shattered
sets under a fixed row budget was studied by Das and M\'esz\'aros \cite{DasMeszaros}
and later by Alon, Sivashankar, and Zhu \cite{AlonSivashankarZhu}.
In particular, the exact maximum for pairs may be deduced by combining a Tur\'an reduction
with the theorem of Kleitman and Spencer on pairwise independent set systems \cite{KleitmanSpencer}.
Our problem is the natural $b$-ary pseudocube analogue of that line of work.

Following \cite{HannekeMengMoranShaeiri}, a nonempty set
$S\subseteq [b]^d$ is called an $\ell$-pseudocube
if every point of $S$ has at least $\ell$ neighbors in every coordinate direction.
Given a $k\times n$ matrix over $[b]$, a set of $d$ columns is $\ell$-pseudo-shattered if the set
of row patterns appearing on those columns contains an $\ell$-pseudocube.
We restrict throughout to the case $d=2$.

Let us focus on the case $\ell = 1$.  Fix a $k\times n$ matrix
$M=(m_{t,j})$ over $[b]$.  For a pair of columns $u,v\in [n]$, write
\[
R_{u,v}(M)=\set{(m_{t,u},m_{t,v}):t\in [k]}\subseteq [b]^2.
\]
The pair $\{u,v\}$ is pseudo-shattered exactly when $R_{u,v}(M)$ contains a
$1$-pseudocube.  For any $R\subseteq [b]^2$, let $G(R)$ be the bipartite graph
with left and right vertex classes both identified with $[b]$, and with an edge
$ij$ whenever $(i,j)\in R$.  If $S\subseteq R$ is a $1$-pseudocube, then every
edge of $G(S)$ has another edge with the same left endpoint and another edge with
the same right endpoint.  Thus every nonisolated vertex of $G(S)$ has degree at
least $2$, so $G(S)$ contains a cycle, and therefore $G(R)$ contains a cycle.
Conversely, the edge set of any cycle in $G(R)$ is a $1$-pseudocube.  Hence, in
dimension two, a pair of columns is $1$-pseudo-shattered if and only if its
support graph contains a cycle.

Interestingly, this property was studied in the context of phylogeny where pairwise compatibility of multi-state characters is expressed by the acyclicity
of a partition-intersection graph; see, for example,
Gysel, Lam, and Gusfield \cite{GyselLamGusfield} and
Shutters, Vakati, and Fern\'andez-Baca \cite{ShuttersVakatiFernandezBaca}.
At the other extreme, one may require every pair of $b$-ary columns to realize all ordered pairs
of symbols.
This stronger qualitative-independence condition was studied by Gargano, K\"orner, and Vaccaro
\cite{GarganoKornerVaccaro}, as well as by Poljak, Pultr, and R\"odl \cite{PoljakPultrRodl}.

The cycle characterization leads to a clean extremal reformulation.
For $x,y\in [b]^k$, write
\[
G(x,y)=G(\set{(x_t,y_t):t\in [k]}).
\]
We say that $x$ and $y$ form a \emph{cyclic pair} if $G(x,y)$ contains a cycle.
For integers $b,k\ge 2$, let $m_b(k)$ denote the maximum size of a family
$C\subseteq [b]^k$ such that every two distinct members of $C$ form a cyclic
pair.  Let $F_b(n,k)$ denote the maximum number of pseudo-shattered pairs among
the columns of a $k\times n$ matrix over $[b]$.  The passage from
$F_b(n,k)$ to $m_b(k)$ is just Tur\'an's theorem.  Given a matrix $M$, form a
graph $H(M)$ on its columns, joining two columns when they are pseudo-shattered.
If $H(M)$ contained a clique of size $m_b(k)+1$, then the corresponding column
types would give a family in $[b]^k$ of size $m_b(k)+1$ in which every two
members form a cyclic pair, contradicting the definition of $m_b(k)$.  Hence
$H(M)$ is $K_{m_b(k)+1}$-free, and Tur\'an's theorem gives
$e(H(M))\le t(n,m_b(k))$.  Conversely, if we take a largest cyclic clique in
$[b]^k$ and repeat its words as evenly as possible as columns, then every two
columns from different parts are pseudo-shattered.  This realizes the Tur\'an
graph, and therefore
\[
F_b(n,k)=t(n,m_b(k)),
\]
where $t(n,r)$ is the number of edges in the Tur\'an graph $T(n,r)$.  Thus the
main question is to understand the largest possible cyclic clique, namely
$m_b(k)$.

In \Cref{sec:good} we give a candidate extremal construction and prove that it
forms a cyclic clique, giving a lower bound $N_b(k)$ for $m_b(k)$.  We believe
that this is the exact answer whenever $k$ is sufficiently large relative to
$b$.

The next two sections give evidence for this conjecture 
from two upper-bound
arguments.  In \Cref{sec:upper} we prove a path-based
permutation-packing LYM-type
inequality.  This bound applies to 
arbitrary cliques.  It already has the right
asymptotic size with respect to the 
construction in \Cref{sec:good}, and it is
exact in the divisible case $k=bq$ under the explicit threshold
$(q+1)^q/q!>b$.

In \Cref{sec:circular} we replace the path by a cycle to get a sharper
fixed-support bound.  For full-support cliques this 
circular bound matches the
good construction when 
$k\equiv 0,-1\pmod b$.  Summing the fixed-support bounds
over all possible 
support sizes gives a general clique bound that improves the path bound
eventually, but still has additive lower-support terms.  
We expect those terms
to be artifacts.  As evidence, we prove in the specific case $b=3$ and
$k\equiv -1\pmod 3$ that the full-support bound already controls an arbitrary
clique, giving the exact value of $m_3(k)$ in that residue class for large $k$.

\section{A Lower Bound Construction}\label{sec:good}

Assume $k\ge b$, and write $k=bq+r$, where $0\le r<b$.
Set
\[
s_1=\cdots=s_{b-r}=q,\qquad s_{b-r+1}=\cdots=s_b=q+1.
\]

\begin{definition}
For a vector $v\in [b]^k$, let
\[
A_i(v)=\set{t\in [k]:v_t=i}.
\]
We say that $v$ is \emph{good} if
\[
\abs{A_i(v)}=s_i\qquad\text{for all }i\in [b],
\]
and the first occurrences of the symbols are in increasing order:
\[
\min A_1(v)<\min A_2(v)<\cdots<\min A_b(v).
\]
Let $N_b(k)$ denote the number of good vectors in $[b]^k$.
\end{definition}

\begin{theorem}\label{thm:good-clique}
Any two distinct good vectors in $[b]^k$ form a cyclic pair.  In particular,
\[
m_b(k)\ge N_b(k).
\]
\end{theorem}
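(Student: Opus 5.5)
The plan is to argue by contradiction. Let $x\neq y$ be good vectors and suppose $G(x,y)$ is a forest. Since $k\ge b$, every $s_i\ge q\ge 1$, so every symbol occurs in both $x$ and $y$ and no vertex of $G(x,y)$ is isolated. Give each edge $ij$ the weight $w_{ij}=\abs{\set{t:(x_t,y_t)=(i,j)}}\ge 1$. The weights satisfy the margin conditions
\[
\sum_j w_{ij}=s_i\ \text{ for each left vertex } i,\qquad \sum_i w_{ij}=s_j\ \text{ for each right vertex } j .
\]
Both sides carry the same margin vector $s$, whose entries lie in $\{q,q+1\}$.

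\textbf{Step 1: reduce to a matching.} If every component of the forest is a single edge, then $G(x,y)$ is a perfect matching given by a bijection $\pi$ with $y_t=\pi(x_t)$ for all $t$. Then $\min A_{\pi(i)}(y)=\min A_i(x)$ for every $i$. Because both first-occurrence orders are increasing, $\pi$ must preserve the order of first occurrences, so $\pi$ is the identity and $x=y$, a contradiction. So the work is to rule out components with at least three vertices.

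\textbf{Step 2: use the scan order.} I would process coordinates $t=1,2,\dots,k$ and track the growing graph $G_t$ built from the first $t$ coordinates. Goodness says that in $x$ the new symbols appear in the order $1,2,\dots,b$, and likewise in $y$. Let $t_0$ be the first coordinate at which the prefix graph stops being the diagonal matching $\set{(i,i)}$. This happens either because a new edge $(i,j)$ with $i\ne j$ appears between already-seen symbols, or because exactly one of $x_{t_0},y_{t_0}$ is a new symbol.

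In the first case, $i$ and $j$ have already been seen, so the diagonal edges $(i,i)$ and $(j,j)$ are present. The new edge $(i,j)$ therefore joins the components of left $i$ and right $j$, and these persist.

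In the second case, the vertex counts of seen symbols on the two sides become unbalanced. From that point on, tree-ness and the margin identities have to be maintained, and I would show this forces $\sum_{L}s=\sum_{R}s$ to fail for the component containing that edge, since its left and right vertex sets have different sizes. That does contradict the identity when $q$ is large. For small $q$ I would compare the leaves of the component against the order in which symbols appear.

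\textbf{Step 3: the leaf argument (main obstacle).} The hard step is turning "a component with at least three vertices" into a contradiction for small $q$. The margin identity alone permits non-matching trees; for example, with $q=1$ a left vertex of margin $2$ can be a star centre joined to two right vertices of margin $1$. So the first-occurrence structure has to be used essentially.

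First, I would take a leaf $\ell$ of a nontrivial component. Its whole margin $s_\ell$ sits on one edge to a neighbour $u$, which has $s_u\ge s_\ell+1$, forcing $s_\ell=q$ and $s_u=q+1$. So $\ell$ is one of the first $b-r$ symbols and $u$ is one of the last $r$. Second, I would show that this large-margin neighbour $u$ must first appear before its paired symbol does on the other side. That contradicts both orders being increasing with margins $q,\dots,q,q+1,\dots,q+1$ assigned to the symbols by index. I expect the bookkeeping of this order comparison to be the delicate part.
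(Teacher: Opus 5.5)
Your Step 1 is fine and matches how the paper disposes of single-edge components. The first half of Step 3 is also right: a leaf $\ell$ with neighbour $u$ in a component with at least three vertices has all its occurrences inside $u$'s level set, with $s_u>s_\ell$. Hence $s_\ell=q$, $s_u=q+1$ and $\ell\le b-r<u$, which already settles $r=0$.

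But the proposal never produces the contradiction itself.

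\begin{itemize}
\item In Step 2, the first case draws no conclusion.
\item In the second case, the claim that the final component containing the new edge has left and right parts of different sizes is unjustified: a prefix imbalance disappears once the missing symbol shows up. Even an unbalanced component satisfies the margin identity when $q$ is small, as you acknowledge.
\item In Step 3, one leaf only yields $\min A_u(y)\le \min A_\ell(x)<\min A_u(x)$, and this is not contradictory. Take $b=3$, $k=7$, $s=(2,2,3)$ and the good vectors $x=1122333$, $y=1233123$. Then $A_2(x)=\{3,4\}\subseteq A_3(y)=\{3,4,7\}$, so left $2$ is a leaf of $G(x,y)$ with neighbour $3$, and $3$ appears earlier in $y$ than in $x$. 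Yet there is no conflict with goodness.
\end{itemize}

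So the contradiction must use more of the forest than one leaf. It cannot come from a single component either, since a component can have all its leaves on one side (a star).

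The missing idea is a global leaf count. After deleting the single-edge components, the remaining forest $H$ has equally many left and right vertices: each deleted component removes one of each, and no vertex is isolated. Suppose every leaf of $H$ were on the left. Then every right vertex has degree at least $2$. Each tree component with $a$ left and $c$ right vertices then satisfies $a+c-1\ge 2c$, so $a>c$, and summing over components contradicts the equal counts. By symmetry, $H$ therefore has a left leaf $i$ with neighbour $j$ and a right leaf $\ell$ with neighbour $h$, possibly in different components, with $i,\ell\le b-r<j,h$. The containments $A_i(x)\subseteq A_j(y)$ and $A_\ell(y)\subseteq A_h(x)$, together with goodness for the two strict steps, give
\[
\min A_j(y)\le \min A_i(x)<\min A_h(x)\le \min A_\ell(y)<\min A_j(y),
\]
which is the contradiction the paper uses.
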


\begin{proof}
Let $v,w\in [b]^k$ be distinct good vectors.
Write
\[
A_i=A_i(v),\qquad B_i=A_i(w).
\]
Suppose for contradiction that $G(v,w)$ is a forest.

First remove all isolated edge components from $G(v,w)$.
An isolated edge component $A_iB_j$ means that $A_i$ meets only $B_j$ and $B_j$ meets only $A_i$,
which forces $A_i=B_j$ because both $\{A_1,\dots,A_b\}$ and $\{B_1,\dots,B_b\}$ are partitions of $[k]$.
If every edge component were isolated, then the two partitions would coincide.
Since both vectors are good and label their blocks by increasing first occurrence, this would imply
$A_i=B_i$ for every $i$, hence $v=w$, a contradiction.
So after removing isolated edge components, a nonempty forest remains; call it $H$.

If $r=0$, then every block has size $q$.
Let $A_i$ be a leaf of $H$, and let $B_j$ be its unique neighbor.
Then $A_i\subseteq B_j$, so $\abs{A_i}\le \abs{B_j}$.
Equality is impossible because isolated edges were removed, and strict inequality is impossible
because both sizes are $q$.
This contradiction shows that $H$ cannot exist when $r=0$.
Hence we may assume $r>0$ from now on.

Now let $A_i$ be a leaf of $H$, with unique neighbor $B_j$.
Since $A_i$ meets no other block on the $B$-side, we have $A_i\subseteq B_j$.
Thus $\abs{A_i}\le \abs{B_j}$.
Equality would again force $A_i=B_j$, giving an isolated edge, so in fact
$\abs{A_i}<\abs{B_j}$.
Because the only block sizes are $q$ and $q+1$, we obtain
\[
\abs{A_i}=q,\qquad \abs{B_j}=q+1,
\]
and hence
\[
i\le b-r<j.
\]
Similarly, if $B_\ell$ is a leaf of $H$ with unique neighbor $A_h$, then
\[
\abs{B_\ell}=q,\qquad \abs{A_h}=q+1,
\]
so
\[
\ell\le b-r<h.
\]

We next claim that $H$ has a leaf on each side.
Suppose, for example, that every leaf of $H$ lay on the $A$-side.
Then every $B$-vertex of $H$ would have degree at least $2$.
In any tree component of $H$ with $a$ vertices on the $A$-side and $c$ vertices on the $B$-side,
the number of edges is $a+c-1$.
Summing degrees over the $B$-side therefore gives
\[
a+c-1\ge 2c,
\]
so $a>c$ in each component.
Summing over all components shows that $H$ has strictly more vertices on the $A$-side than on the
$B$-side, which is impossible because we removed isolated edge components in matched pairs.
Thus $H$ has a leaf on the $B$-side as well.
By symmetry, it also has a leaf on the $A$-side.

Choose an $A$-side leaf $A_i$ with unique neighbor $B_j$, and a $B$-side leaf $B_\ell$
with unique neighbor $A_h$.
From the inequalities above we have
\[
i\le b-r<j,\qquad \ell\le b-r<h,
\]
so in particular
\[
i<h,\qquad \ell<j.
\]

Let
\[
\alpha_t=\min A_t,\qquad \beta_t=\min B_t.
\]
Since $v$ and $w$ are good,
\[
\alpha_1<\alpha_2<\cdots<\alpha_b,\qquad
\beta_1<\beta_2<\cdots<\beta_b.
\]
Because $A_i\subseteq B_j$, we have $\beta_j\le \alpha_i$.
Because $B_\ell\subseteq A_h$, we have $\alpha_h\le \beta_\ell$.
Using $i<h$ and $\ell<j$, we get
\[
\alpha_i<\alpha_h,\qquad \beta_\ell<\beta_j.
\]
Putting everything together yields
\[
\beta_j\le \alpha_i<\alpha_h\le \beta_\ell<\beta_j,
\]
a contradiction.
Therefore $G(v,w)$ cannot be a forest, so it contains a cycle.
\end{proof}

\begin{proposition}\label{prop:count-good}
The number of good vectors is
\[
N_b(k)=\prod_{i=1}^{b-1}\binom{k-\sum_{j<i}s_j-1}{s_i-1}.
\]
In particular, if $k=bq$, then
\[
N_b(k)=\frac{k!}{b!(q!)^b}.
\]
\end{proposition}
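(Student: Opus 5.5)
We count good vectors by filling the positions of $[k]$ symbol by symbol, in the order $1,2,\dots,b$. The first-occurrence condition says exactly that when symbol $i$ is placed, its first occurrence must be the smallest position still free, since every later symbol starts after it. So a good vector is the same as a sequence of choices: for each $i$ in turn, symbol $i$ takes the smallest currently unfilled position and also $s_i-1$ further positions, chosen freely from the remaining unfilled ones.

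\emph{Bijection.} Suppose $v$ is good and positions have been assigned to symbols $1,\dots,i-1$. Let $p$ be the smallest unfilled position. We have $v_p\ge i$. If $v_p=i'>i$, then $\min A_i(v)>p\ge \min A_{i'}(v)$, which contradicts $\min A_i<\min A_{i'}$. Hence $p=\min A_i(v)$.

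Conversely, take any sequence of such choices. Since $\sum_i s_i=k$, every position is filled, so it defines a vector with $\abs{A_i}=s_i$. Now let $i<i'$. The position $\min A_i$ was the smallest position free at step $i$, and $A_{i'}$ lies among the positions still free at that step (after removing $\min A_i$ itself). Therefore $\min A_i<\min A_{i'}$, and the vector is good. The two maps are mutually inverse, because the vector determines its blocks $A_i$ and the blocks determine the vector.

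\emph{Counting.} At step $i$ there are $k-\sum_{j<i}s_j$ unfilled positions. Symbol $i$ takes the smallest of them, leaving $k-\sum_{j<i}s_j-1$ positions from which to choose its other $s_i-1$ positions, so there are $\binom{k-\sum_{j<i}s_j-1}{s_i-1}$ options. At step $b$ exactly $s_b$ positions remain and all of them go to symbol $b$, giving the factor $\binom{s_b-1}{s_b-1}=1$; this is why the product stops at $i=b-1$. Multiplying over the steps gives the formula.

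\emph{Divisible case.} For $k=bq$ all $s_i=q$, and the $i$-th factor is $\binom{(b-i+1)q-1}{q-1}$. Using $\binom{mq-1}{q-1}=\frac{q}{mq}\binom{mq}{q}=\frac1m\binom{mq}{q}$ with $m=b-i+1$, the product over $i=1,\dots,b$ (including the trivial factor at $i=b$) equals
\[
\frac{1}{b!}\prod_{m=1}^{b}\binom{mq}{q}=\frac{1}{b!}\cdot\frac{(bq)!}{(q!)^b}=\frac{k!}{b!(q!)^b},
\]
since the product of binomials telescopes to a multinomial coefficient.

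An equivalent check: ordered partitions of $[k]$ into blocks of size $q$ number $k!/(q!)^b$. Each unordered partition has exactly one labeling by increasing first occurrence among its $b!$ labelings, and all blocks have the same size, so dividing by $b!$ gives the count. The only point needing care in the whole argument is the bijection step, namely that the first-occurrence ordering forces the smallest free position at each stage; the rest is routine.
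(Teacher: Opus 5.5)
Your proposal is correct and follows the paper's proof. The paper also builds a good vector block by block, noting that the smallest unused position is forced into $A_i$ and the other $s_i-1$ elements are free, then telescopes the product when $k=bq$. You additionally check the bijection in both directions, which the paper leaves implicit, and your identity $\binom{mq-1}{q-1}=\frac{1}{m}\binom{mq}{q}$ is a tidy way to carry out the same telescoping.
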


\begin{proof}
We construct a good vector block by block.
The first block $A_1$ must contain $1$, so there are
\[
\binom{k-1}{s_1-1}
\]
choices for $A_1$.
After choosing $A_1,\dots,A_{i-1}$, let $m_i$ be the smallest unused element.
The condition of being good forces $m_i\in A_i$, and we may choose the remaining
$s_i-1$ elements of $A_i$ arbitrarily from the unused elements larger than $m_i$.
At that stage there are
\[
k-\sum_{j<i}s_j-1
\]
available choices, so the number of possibilities for $A_i$ is
\[
\binom{k-\sum_{j<i}s_j-1}{s_i-1}.
\]
Multiplying over $i=1,\dots,b-1$ gives the formula.

If $k=bq$, then $s_1=\cdots=s_b=q$, so the product formula gives
\[
N_b(k)=\prod_{i=1}^{b-1}\binom{k-(i-1)q-1}{q-1}.
\]
A straightforward telescoping simplification yields
\[
N_b(k)=\frac{(k-1)!}{(q-1)!^b\prod_{t=1}^{b-1}(k-tq)}
=\frac{k!}{b!(q!)^b}.
\]
\end{proof}

\section{A Permutation-Packing Upper Bound}\label{sec:upper}

In this section we give an upper bound in the spirit of Meshalkin's theorem \cite{Meshalkin}.

For a word $x\in [b]^k$, write
\[
k_i(x)=\abs{\set{t\in [k]:x_t=i}},
\]
and
\[
b'(x)=\abs{\set{i\in [b]:k_i(x)>0}}.
\]
An \emph{interval partition} of $[k]$ is a partition whose blocks can be ordered
$I_1,\dots,I_s$ so that, for some
\[
1=a_1<a_2<\cdots<a_{s+1}=k+1,
\]
we have
\[
I_j=\{a_j,a_j+1,\dots,a_{j+1}-1\}\qquad\text{for }j=1,\dots,s.
\]

\begin{lemma}\label{lem:interval-forest}
Let $P$ and $Q$ be two interval partitions of $[k]$.
Then the bipartite overlap graph with vertex classes $P$ and $Q$ and edges given by nonempty intersection
is a forest.
\end{lemma}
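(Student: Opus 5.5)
We argue by counting vertices and edges. Every component of the overlap graph must then be a tree, since edges $=$ vertices $-$ components (summed over components) holds exactly for forests.

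Let $P$ have $s$ blocks and $Q$ have $s'$ blocks. We first count the edges. The common refinement $P\wedge Q$, consisting of the nonempty sets $I\cap J$, is again an interval partition. It is determined by the union of the two sets of left endpoints $\{a_j\}$ of $P$ and $\{a'_j\}$ of $Q$. Both sets contain $1$, so the refinement has at most $s+s'-1$ blocks. The number of blocks of $P\wedge Q$ equals the number of edges $e$ of the overlap graph, so $e\le s+s'-1$.

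A count alone does not exclude a cycle in one component plus extra components, so we also need connectivity. We argue that the overlap graph is connected. Consecutive points $t,t+1\in[k]$ either lie in the same $P$-block or in the same $Q$-block, or else both $P$ and $Q$ have a boundary at $t+1$. This is awkward, so instead we argue directly against cycles.

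Suppose there is a cycle $I_1J_1I_2J_2\cdots I_mJ_m I_1$ with $m\ge2$, where the $I$'s are distinct blocks of $P$ and the $J$'s are distinct blocks of $Q$. Among the blocks on the cycle, take the $P$-block $I^*$ that lies furthest to the right. Its two cycle neighbours $J,J'$ are distinct $Q$-intervals meeting $I^*$. Since $J$ and $J'$ are disjoint intervals, one of them, say $J'$, lies entirely to the right of the other. Then $J'$ has a second cycle neighbour $I'\neq I^*$ from $P$. The intervals $I'$ and $I^*$ are disjoint, and $I'$ lies to the left of $I^*$ by maximality. But $J'$ meets both $I'$ and $I^*$, and $J'$ is an interval, so $J'$ contains points to the left of $I^*$ and therefore contains the left endpoint of $I^*$ together with the point just before it. Now $J$ meets $I^*$ and lies entirely to the left of $J'$, so $J$ meets $I^*$ only at points smaller than $\min J'\le \min I^*$. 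This is impossible, so no cycle exists.

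The main point to get right is this extremal ordering argument. It works cleanly because distinct blocks of one partition are disjoint intervals and are therefore linearly ordered.
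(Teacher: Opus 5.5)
Your closing extremal argument is correct and complete on its own, but it takes a different route from the paper's. The paper inducts on the total number of blocks. Since $I_1$ and $J_1$ both contain $1$, whichever ends first is contained in the other and is a leaf of the overlap graph. Deleting it (and trimming, or deleting, the other first block) leaves two interval partitions of a shorter segment, and reattaching a leaf cannot create a cycle. You instead refute a hypothetical cycle directly. The right-hand cycle neighbour $J'$ of the rightmost cycle block $I^*\in P$ also meets some $I'\in P$ lying to the left of $I^*$. So the interval $J'$ contains $\min I^*$, and the other neighbour $J$, which lies entirely before $J'$, cannot meet $I^*$. Your version is shorter and avoids the delete-and-retranslate bookkeeping. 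The paper's induction in return produces an explicit leaf at every stage, and hence a full leaf-elimination order; this is the same ``containment gives a leaf'' mechanism used in the proof that good vectors form a clique. You should cut your first two paragraphs. The bound $e\le s+s'-1$ is never used. The connectivity you set out to prove is false in general: for $P=Q$ the overlap graph is a perfect matching. That is exactly why the edge count alone could not finish the proof.
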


\begin{proof}
Order the blocks as
\[
P=\{I_1,\dots,I_s\},\qquad Q=\{J_1,\dots,J_t\},
\]
from left to right.
We argue by induction on $s+t$.
The claim is trivial when $s+t=2$.

Both $I_1$ and $J_1$ contain the element $1$.
If $\max I_1\le \max J_1$, then $I_1\subseteq J_1$, so $I_1$ is a leaf in the overlap graph.
Delete the vertex $I_1$ and remove its points from $J_1$, deleting $J_1$ as well if it becomes empty.
After translating the remaining ground set, we obtain two interval partitions of a smaller set,
so the remaining overlap graph is a forest by induction.
Reattaching the leaf $I_1$ shows that the original overlap graph is also a forest.

The case $\max J_1\le \max I_1$ is symmetric, with $J_1$ as the leaf.
\end{proof}

\begin{theorem}[An LYM-type inequality]\label{thm:packing}
Let $C\subseteq [b]^k$ be a clique.  For $x\in C$, let $\Pi(x)$ be the set of
permutations $\sigma\in S_k$ such that
\[
x_{\sigma^{-1}(1)}x_{\sigma^{-1}(2)}\cdots x_{\sigma^{-1}(k)}
\]
is constant on $b'(x)$ consecutive intervals, one interval for each symbol that
appears in $x$.  Then the sets $\Pi(x)$, $x\in C$, are pairwise disjoint.
Consequently,
\[
\sum_{x\in C} b'(x)!\prod_{i=1}^b k_i(x)!\le k!.
\]
\end{theorem}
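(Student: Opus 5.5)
The plan is to prove disjointness by contradiction, using \Cref{lem:interval-forest}, and then obtain the inequality by counting. Suppose $\sigma\in\Pi(x)\cap\Pi(y)$ for distinct $x,y\in C$. Since $x$ and $y$ lie in a clique, they form a cyclic pair, so $G(x,y)$ contains a cycle. We will show that $\sigma$ forces $G(x,y)$ to be a forest.

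\emph{Step 1: reduce to interval partitions.} Relabel positions by $\sigma$, i.e.\ pass to the words $x'_s=x_{\sigma^{-1}(s)}$ and $y'_s=y_{\sigma^{-1}(s)}$. This permutes the coordinates simultaneously in both words, so the multiset of pairs $(x_t,y_t)$ is unchanged. Hence $G(x',y')=G(x,y)$. Because $\sigma\in\Pi(x)$, the nonempty level sets $A_i(x')$ are exactly $b'(x)$ consecutive intervals, so they form an interval partition $P$ of $[k]$. Likewise the nonempty level sets $A_j(y')$ form an interval partition $Q$.

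\emph{Step 2: identify the graphs.} The nonisolated part of $G(x',y')$ has left vertices the symbols $i$ with $k_i(x)>0$ and right vertices the symbols $j$ with $k_j(y)>0$. An edge $ij$ is present exactly when some position $s$ has $x'_s=i$ and $y'_s=j$, that is, when $A_i(x')\cap A_j(y')\neq\emptyset$. So, apart from isolated vertices, $G(x,y)$ is precisely the bipartite overlap graph of $P$ and $Q$. By \Cref{lem:interval-forest} this graph is a forest, contradicting the fact that $x,y$ form a cyclic pair. Therefore the sets $\Pi(x)$ are pairwise disjoint.

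\emph{Step 3: count $\abs{\Pi(x)}$.} A permutation $\sigma\in\Pi(x)$ is determined by three choices:
\begin{itemize}
\item the left-to-right order of the $b'(x)$ intervals, one per symbol present, which gives $b'(x)!$ choices;
\item the interval lengths, which are then forced to be $k_i(x)$;
\item for each symbol $i$, a bijection sending the positions $t$ with $x_t=i$ onto its interval, which gives $k_i(x)!$ choices.
\end{itemize}
Distinct choices yield distinct $\sigma$, and every $\sigma\in\Pi(x)$ arises this way. Hence $\abs{\Pi(x)}=b'(x)!\prod_i k_i(x)!$, where $0!=1$ handles the absent symbols.

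Disjointness inside $S_k$ then gives $\sum_{x\in C}\abs{\Pi(x)}\le k!$, which is the stated inequality. There is no real obstacle, since the forest property is supplied by \Cref{lem:interval-forest}. The only care needed is in Step 2: a simultaneous coordinate permutation preserves $G$, and isolated vertices from absent symbols do not affect acyclicity.
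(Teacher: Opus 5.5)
Your proposal is correct and follows the paper's proof essentially step for step. Like the paper, you use $\sigma$ to turn both level-set partitions into interval partitions, invoke \Cref{lem:interval-forest} to contradict cyclicity, and count $\abs{\Pi(x)}=b'(x)!\prod_i k_i(x)!$ by ordering the symbol intervals and then the positions within each one.
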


\begin{proof}
Suppose $\sigma\in \Pi(x)\cap \Pi(y)$ for some distinct $x,y\in C$.
After applying $\sigma$, both words become constant on consecutive intervals.
Therefore the partitions of $[k]$ induced by the level sets of $x$ and $y$ become interval partitions.
By \Cref{lem:interval-forest}, their overlap graph is a forest.
But this overlap graph is exactly $G(x,y)$, contradicting the assumption that $x$ and $y$
form a cyclic pair.
Thus $\Pi(x)\cap \Pi(y)=\varnothing$ whenever $x\neq y$ in $C$.

Since the sets $\Pi(x)$ are pairwise disjoint subsets of $S_k$, we have
\[
\sum_{x\in C}\abs{\Pi(x)}\le k!.
\]
For fixed $x$, choose an ordering of the $b'(x)$ symbols appearing in $x$, and
then order the elements inside each level set.  This gives
\[
\abs{\Pi(x)}=b'(x)!\prod_{i=1}^b k_i(x)!,
\]
so the displayed inequality gives the result.
\end{proof}

\begin{theorem}\label{thm:path-consequences}
Let $C\subseteq [b]^k$ be a clique, and write $k=bq+r$, where $0\le r<b$.
Then the following hold.
\begin{enumerate}[label=\textup{(\roman*)}]
\item
Every clique satisfies
\[
\abs{C}\le \frac{k!}{\min_{x\in C}\left(b'(x)!\prod_{i=1}^b k_i(x)!\right)}.
\]
\item
If
\[
\frac{(q+1)^q}{q!}>b,
\]
then every clique satisfies
\[
\abs{C}\le
\frac{k!}{b!(q!)^{b-r}((q+1)!)^r}.
\]
\item
Under the same hypothesis,
\[
m_b(k)\le (1+O(b/q))N_b(k).
\]
In particular, for fixed $b$,
\[
m_b(k)=(1+o(1))N_b(k)
\]
as $q\to\infty$.
\item
If the hypothesis in \textup{(ii)} holds and $r=0$, then
\[
m_b(k)=\frac{k!}{b!(q!)^b}.
\]
\item
Let $b^*$ be a value of the integer $y$ which attains the minimum
of the expression $y! \prod_{i=1}^y k_i!$ over all integers
$1\leq y\leq k$ and positive integers $k_i$ satisfying
$\sum_{i=1}^y k_i=k$.  Suppose $k=b^*q+r$ with $0 \leq r < b^*$.
Then, for every $b\geq 2$,
\[
m_b(k) \leq \frac{k!}{b^*!(q!)^{b^*-r}((q+1)!)^r}.
\]
If $r=0$, then for every $b\geq b^*$,
\[
m_b(k)=\frac{k!}{b^*!(q!)^{b^*}}.
\]
\end{enumerate}
\end{theorem}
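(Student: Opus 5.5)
All five parts follow from \Cref{thm:packing}, and the real work is a discrete minimization. Define
\[
w(x)=b'(x)!\prod_{i=1}^b k_i(x)!.
\]
Then \Cref{thm:packing} gives $\abs{C}\min_{x\in C}w(x)\le \sum_{x\in C}w(x)\le k!$, which is part (i).

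For every other part I will bound $\min w$ over all words. The quantity $w(x)$ depends only on the multiset of positive part sizes $k_1,\dots,k_y$, where $y=b'(x)\le b$ and $\sum_i k_i=k$. So I need to minimize
\[
f(y;k_1,\dots,k_y)=y!\prod_i k_i!.
\]

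For fixed $y$, I balance the parts. If $k_i\ge k_j+2$, then replacing $(k_i,k_j)$ by $(k_i-1,k_j+1)$ multiplies the product by $(k_j+1)/k_i<1$. Hence the minimum for fixed $y$ is attained at the balanced partition. Writing $k=yq_y+r_y$, this minimum is
\[
g(y)=y!\,(q_y!)^{y-r_y}((q_y+1)!)^{r_y}.
\]
Part (v) is then immediate: $b^*$ is by definition a global minimizer, balancing shows the minimum is $g(b^*)$, and (i) gives the bound. For the equality statement when $r=0$ and $b\ge b^*$, I take the good vectors of \Cref{sec:good} with $b^*$ symbols, viewed inside $[b]^k$. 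They form a clique by \Cref{thm:good-clique}, and by \Cref{prop:count-good} there are exactly $k!/(b^*!(q!)^{b^*})$ of them.

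The main obstacle is part (ii): I must show $g(y)\ge g(b)$ for all $y\le b$ under the hypothesis $(q+1)^q/q!>b$, where $q=\lfloor k/b\rfloor$. My plan is to compare $g(y-1)$ with $g(y)$ for $y\le b$ and show $g(y-1)\ge g(y)$, so that $g$ decreases on $[1,b]$.

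To pass from $y$ to $y-1$ parts, I redistribute one part of size about $k/y$ over the remaining $y-1$ parts, each growing by roughly $1$. The factorial product is then multiplied by roughly
\[
\frac{\prod_{\text{$\sim q_y$ parts}}(q_y+1)}{q_y!}\approx \frac{(q_y+1)^{q_y}}{q_y!},
\]
while $y!$ drops by a factor of $y$. So the required inequality is essentially
\[
\frac{(q_y+1)^{q_y}}{q_y!}\ge y.
\]
Since $q_y\ge q$ for $y\le b$, the function $(q+1)^q/q!$ is increasing in $q$, and $y\le b$, the hypothesis should give this. The delicate point is the bookkeeping when $r_y\neq 0$, and when the removed part has size $q_y$ versus $q_y+1$. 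I would first attempt to bound $g(y-1)/g(y)$ directly via the inequality $\frac{(m+j)!}{m!}\ge (m+1)^j$ applied part by part. If that fails I would fall back to the crude comparison: take the balanced $y$-partition, delete one smallest part, and spread its $q_y$ units one each onto other parts, each of size at least $q_y$. This shows some $(y-1)$-part partition has factorial product at least $g(y)\cdot(q_y+1)^{q_y}/(q_y!\,y)$ relative to $y!$. The inequality I actually need runs in the opposite direction, though, so the careful comparison has to go from the $(y-1)$-balanced partition to a $y$-partition. That is done by splitting off $q_y$ elements, one from each of $q_y$ parts. This splitting goes the right way: it produces a $y$-partition whose value is at most $g(y-1)\cdot y\cdot q_y!/\prod(\text{sizes})\le g(y-1)\,y\,q_y!/(q_y+1)^{q_y}$. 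Hence $g(y)\le g(y-1)$, provided $y-1\ge q_y$ so that there are enough parts to split from. When $y-1<q_y$ I would split several elements from each part and use the analogous estimate.

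Part (iii) follows from (ii) by comparing the bound with the formula in \Cref{prop:count-good}. The product of binomials equals
\[
\frac{k!}{\prod_i s_i!}\prod_{i=1}^{b-1}\frac{s_i}{k-\sum_{j<i}s_j},
\]
and for balanced $s_i$ this is $\frac{k!}{b!\prod_i s_i!}(1+O(b/q))^{-1}$. This holds because each factor $s_i b'/(\text{remaining})$ with $b'=b-i+1$ deviates from $1$ by $O(1/q)$ relative error, and only the $r$ parts of size $q+1$ contribute deviations. Part (iv) is (ii) with $r=0$, combined with the exact count $N_b(bq)=k!/(b!(q!)^b)$ and \Cref{thm:good-clique}.
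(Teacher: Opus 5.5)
Your parts (i), (iii), (iv) and (v) follow the paper. The gap is in (ii), which you rightly call the main obstacle but do not actually prove.

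Your splitting comparison is only carried out when $y-1\ge q_y$, and that is the exceptional regime: it forces $k=yq_y+r_y<y(q_y+1)\le y^2$. As soon as $q\ge b$, every $y\le b$ has $q_y\ge q>y-1$. This includes fixed $b$ and large $k$, the regime the theorem is aimed at, and you leave that case to ``the analogous estimate''.

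Even inside your case, the bound $\prod(\text{sizes})\ge (q_y+1)^{q_y}$ depends on a choice you do not make. If $\floor{k/(y-1)}=q_y$, the balanced $(y-1)$-partition has parts of size exactly $q_y$. Splitting from one of them contributes the factor $q_y$ rather than $q_y+1$. You must split only from parts of size $q_y+1$, of which there are then $q_y+r_y$.

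The repair is the inequality you mention and then set aside, $(m+j)!/m!\ge(m+1)^j$, applied directly between the two balanced partitions. Comparing the two minimizers themselves also disposes of your worry about the direction of the comparison.

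\begin{enumerate}
\item Delete one part of size $q_y$ from the balanced $y$-partition. The remaining sizes $n_1,\dots,n_{y-1}$ can be matched with the balanced $(y-1)$-sizes $m_1,\dots,m_{y-1}$ so that $m_i\ge n_i$. This is trivial if $\floor{k/(y-1)}>q_y$. Otherwise the $(y-1)$-partition has $q_y+r_y$ parts of size $q_y+1$, at least as many as the $r_y$ such parts among the $n_i$.
\item Since $\sum_i(m_i-n_i)=q_y$ and every $n_i\ge q_y$, you get $\prod_i m_i!\ge (q_y+1)^{q_y}\prod_i n_i!$.
\item Hence $g(y-1)/g(y)\ge (q_y+1)^{q_y}/(y\,q_y!)>1$ for all $2\le y\le b$, with no case split. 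The last inequality uses $q_y\ge q$ and that $t\mapsto (t+1)^t/t!$ is increasing. You assert this monotonicity without proof; it holds because the consecutive ratio is $(1+\frac{1}{t+1})^{t+1}>1$.
\end{enumerate}

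The paper avoids the chain altogether. For each $s<b$ it writes the balanced $s$-support multiplicities as $q+e_1,\dots,q+e_s$ with $\sum_i e_i=(b-s)q+r$. It then uses $(q+e)!\ge q!(q+1)^e$ once and divides by the full-support value $b!(q!)^b(q+1)^r$ to get
\[
\frac{s!\,(q!)^s(q+1)^{(b-s)q+r}}{b!\,(q!)^b(q+1)^r}=\frac{s!}{b!}\Bigl(\frac{(q+1)^q}{q!}\Bigr)^{b-s}>\frac{s!}{b!}\,b^{b-s}\ge 1.
\]
This one-shot comparison works directly with $q=\floor{k/b}$ and needs neither the matching nor monotonicity in $q$. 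Your chain, once repaired, gives the slightly stronger statement that the balanced value is strictly decreasing in the support size on $[1,b]$.
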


\begin{proof}
Put
\[
D(x)=b'(x)!\prod_{i=1}^b k_i(x)!.
\]
Item \textup{(i)} follows immediately from \Cref{thm:packing}, since
\[
\abs{C}\min_{x\in C}D(x)\le \sum_{x\in C}D(x)\le k!.
\]

Fix the support size $s=b'(x)$, and let $a_1,\dots,a_s$ be the positive
multiplicities of symbols in $x$.  If $a_j\ge a_i+2$, then replacing
$(a_i,a_j)$ by $(a_i+1,a_j-1)$ changes the product of factorials by the factor
\[
\frac{(a_i+1)!(a_j-1)!}{a_i!a_j!}
=\frac{a_i+1}{a_j}<1.
\]
Thus, for fixed $s$, $D(x)$ is minimized when the $s$ positive multiplicities
are as equal as possible.

The full-support balanced value is
\[
D_b=b!(q!)^{b-r}((q+1)!)^r=b!(q!)^b(q+1)^r.
\]
For $s<b$, the balanced $s$-support multiplicities are all at least $q$, so
write them as $q+e_1,\dots,q+e_s$, where $e_i\ge0$ and
\[
e_1+\cdots+e_s=(b-s)q+r.
\]
Then the corresponding denominator is at least
\[
s!(q!)^s(q+1)^{(b-s)q+r}.
\]
Dividing by $D_b$, we get
\[
\frac{s!(q!)^s(q+1)^{(b-s)q+r}}{D_b}
=
\frac{s!}{b!}
\left(\frac{(q+1)^q}{q!}\right)^{b-s}
>
\frac{s!}{b!}b^{b-s}\ge 1.
\]
Hence $D(x)\ge D_b$ for every word $x$, and item \textup{(ii)} follows from
item \textup{(i)}.

It remains to compare this bound with the construction.  Let
\[
s_1=\cdots=s_{b-r}=q,\qquad s_{b-r+1}=\cdots=s_b=q+1,
\qquad R_i=s_i+\cdots+s_b.
\]
By \Cref{prop:count-good},
\[
N_b(k)=
\frac{(k-1)!}
{s_b!\prod_{i=1}^{b-1}(s_i-1)!\prod_{i=2}^{b-1}R_i}.
\]
Therefore
\[
\frac{k!}{b!(q!)^{b-r}((q+1)!)^r\,N_b(k)}
=
\frac{k}{b!}\,
\frac{\prod_{i=2}^{b-1}R_i}{\prod_{i=1}^{b-1}s_i}
\]
\[
\leq \frac{k}{b!} \frac{(b-1)!(q+1)^{b-2}}{q^{b-1}}
\leq \frac{k}{bq}(1+O(\frac{b}{q}))\leq 1+O(\frac{b}{q}).
\]
Together with the lower bound $m_b(k)\ge N_b(k)$, this proves item
\textup{(iii)}.  When $r=0$, \Cref{prop:count-good} gives
$N_b(k)=k!/(b!(q!)^b)$, so the upper bound is tight, proving item
\textup{(iv)}.

For item \textup{(v)}, the definition of $b^*$ implies that every
word $x\in [b]^k$ satisfies
\[
b'(x)!\prod_{i=1}^b k_i(x)!
\geq b^*!(q!)^{b^*-r}((q+1)!)^r.
\]
The upper bound follows from item \textup{(i)}.  If $r=0$, the construction
using $b^*$ symbols has size $k!/(b^*!(q!)^{b^*})$ and embeds in $[b]^k$
for every $b\geq b^*$.  Hence the upper bound is attained.
\end{proof}

\begin{remark}
There are only some values of $k$ for which the minimizer $b^*$ is not unique.
In fact, if
\[
F_k(y)=y!(q!)^{y-r}((q+1)!)^r,\qquad k=yq+r,\quad 0\le r<y,
\]
then $F_k(y)$ is unimodal in $y$, so nonuniqueness forces an adjacent tie.  A
direct adjacent-ratio calculation shows that the only such values are
\[
k\in\{2,5,16,23,223,269\}.
\]
The divisible case in item \textup{(v)} can also be recognized explicitly.  If
$k=Qb^*$, then $b^*$ is a minimizer precisely when
\[
\max\left\{1,\left\lceil \frac{Q^Q}{Q!}-1\right\rceil\right\}
\le b^*\le
\left\lfloor \frac{(Q+1)^Q}{Q!}\right\rfloor .
\]
This follows by comparing $F_k(b^*)$ with its two neighboring values; the
unimodality of $F_k$ makes these local inequalities sufficient.  We omit the
details for brevity.
\end{remark}

\section{Improving the Upper Bound}\label{sec:circular}

The permutation-packing bound in \Cref{sec:upper} applies to arbitrary cliques.
In particular, it applies to the special case where every word has full support.
If $k=bq+r$ with $0\le r<b$, this full-support specialization is
\[
P_{b,r}(q):=
\frac{k!}{b!(q!)^{b-r}((q+1)!)^r}.
\]
The circular argument improves this fixed-support bound by replacing the path
with a cycle.  This use of cyclic orders is an adaptation of Katona's cycle
method, introduced in his proof of the Erd\H{o}s--Ko--Rado theorem and developed
more broadly as a permutation method in extremal set theory
\cite{KatonaEKR,KatonaCycleMethod,ErdosFurediKatona,FranklKatonaCircle}.  The
extra structure here is the set of boundary edges between consecutive level
intervals.  If two clique members are interval words for the same cyclic order
and share a boundary edge, then cutting the circle at that edge turns their
level sets into two interval partitions of a line.  The interval forest lemma
then says that their overlap graph cannot contain a cycle.  Thus, for a fixed
cyclic order, the boundary sets of clique members are disjoint.

For full-support words, the circular bound is $(bq/k)P_{b,r}(q)$.  Thus it is
strictly smaller than the path bound whenever $r>0$.  Since the canonical
construction itself uses full support, this is the right comparison to make; the
circular full-support bound matches the construction when $r=0$ and when
$r=b-1$.  To get back to arbitrary cliques, one can sum the circular bounds over
the possible support sizes.  This gives a valid general upper bound and
eventually improves the general path bound, but it is probably not the truth:
the additive lower-support terms should not be necessary.  As evidence, we prove
that when $b=3$ and $k\equiv2\pmod3$, the full-support circular bound already
upper-bounds arbitrary cliques.  The proof is in \Cref{app:b3-support-two}.

\subsection{The circular support bound}

An \emph{oriented cyclic order} $\Omega=(v_1,\dots,v_k)$ of $[k]$ is a cyclic
listing, considered up to rotation but not reversal.  There are $(k-1)!$ such
orders.  Indices are read modulo $k$, and the cycle-edges are
\[
e_j=\{v_j,v_{j+1}\},\qquad j\in[k].
\]
A nonempty set of the form
\[
\{v_a,v_{a+1},\dots,v_{a+L-1}\},\qquad L\ge 1,
\]
is a \emph{cyclic interval} of $\Omega$; in particular $[k]$ itself is allowed.

For $x\in [b]^k$, write
\[
A_i(x)=\{s\in[k]:x_s=i\}.
\]
We say that $x$ is an \emph{$\Omega$-interval word} if every nonempty level set
$A_i(x)$ is a cyclic interval of $\Omega$.  Its boundary is
\[
\partial_\Omega x
=
\{e_j:x_{v_j}\ne x_{v_{j+1}}\}.
\]
Thus an $\Omega$-interval word using $b'(x)$ symbols has
\[
\abs{\partial_\Omega x}=b'(x).
\]

\begin{lemma}\label{lem:circular-boundaries-disjoint}
Fix an oriented cyclic order $\Omega$ of $[k]$.  Let $C\subseteq [b]^k$ be a clique.
If $x,y\in C$ are distinct $\Omega$-interval words, then
\[
\partial_\Omega x\cap \partial_\Omega y=\varnothing.
\]
\end{lemma}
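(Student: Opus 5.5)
We argue by contradiction and reduce to \Cref{lem:interval-forest}. Suppose $x,y\in C$ are distinct $\Omega$-interval words and some cycle-edge $e_j=\{v_j,v_{j+1}\}$ lies in both $\partial_\Omega x$ and $\partial_\Omega y$. Cut the cycle at $e_j$: list $[k]$ along the path
\[
v_{j+1},v_{j+2},\dots,v_{j+k}=v_j .
\]
This gives a bijection $\tau$ from $[k]$ to itself, sending position $p$ on this path to the element $v_{j+p}$. Pulling back the level sets of $x$ and $y$ along $\tau$ yields two partitions of $[k]$. We claim both are interval partitions. Then \Cref{lem:interval-forest} says their overlap graph is a forest. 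Relabeling the ground set does not change the intersection pattern of blocks, so this overlap graph is exactly $G(x,y)$. That contradicts $x,y$ forming a cyclic pair, since $C$ is a clique.

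The key step is checking the claim. Take a nonempty level set $A_i(x)$; it is a cyclic interval $\{v_a,\dots,v_{a+L-1}\}$. If $L=k$, then $x$ is constant, so $\partial_\Omega x=\varnothing$, which contradicts $e_j\in\partial_\Omega x$. So $L<k$.

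Now suppose $A_i(x)$ "wrapped around" the cut. Then it would contain both $v_j$ and $v_{j+1}$ as consecutive members of its cyclic run. In that case $x_{v_j}=x_{v_{j+1}}=i$, again contradicting $e_j\in\partial_\Omega x$. More carefully: a proper cyclic interval whose run does not use the edge $e_j$ between consecutive elements is a contiguous block in the linear order starting at $v_{j+1}$. The edges used inside the run are $e_a,\dots,e_{a+L-2}$, and none of these lies in $\partial_\Omega x$, since consecutive elements of a level set carry the same symbol. Hence $e_j$ is not among them, and the block is contiguous after cutting.

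The same argument applies to $y$. So both pulled-back partitions are interval partitions, and the contradiction follows.

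The only mildly delicate point is this bookkeeping: a proper cyclic interval avoiding the cut edge becomes a linear interval, and the cut edge being a boundary edge of both words is exactly what guarantees no level set straddles the cut. The rest is a direct invocation of \Cref{lem:interval-forest}.
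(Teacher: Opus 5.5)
Your proof is correct and is the same argument as the paper's: cut $\Omega$ at the shared boundary edge, note that no level set of either word straddles the cut, and apply \Cref{lem:interval-forest} to the two resulting interval partitions. Your extra bookkeeping (ruling out a constant word, and checking that a proper cyclic interval becomes a linear interval because its internal edges avoid $e_j$) makes explicit what the paper states in one line; one small nitpick is that the overlap graph equals $G(x,y)$ only after deleting isolated vertices for unused symbols, which does not affect whether there is a cycle.
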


\begin{proof}
Suppose that $e\in \partial_\Omega x\cap \partial_\Omega y$.  Cut the cycle
$\Omega$ at the edge $e$.  Since $e$ is a boundary edge for both words, no nonempty
level set of either word wraps around the cut.  Hence the nonempty level sets of
$x$ and $y$ become interval partitions of a linearly ordered set.  By
\Cref{lem:interval-forest}, their overlap graph is a forest.  This overlap graph is
exactly $G(x,y)$ after deleting isolated symbol vertices, contradicting the
assumption that $C$ is a clique.
\end{proof}

For $2\le t\le b$, let
\[
C_t=\{x\in C:b'(x)=t\}
\]
be the part of a clique consisting of words that use exactly $t$ symbols.

\begin{theorem}[Circular support-size bound]\label{thm:circular-support}
Let $C\subseteq [b]^k$ be a clique, and fix $2\le t\le b$.  Write
\[
k=tQ+\rho,\qquad 0\le \rho<t.
\]
Then
\[
\abs{C_t}\le
\frac{Q(k-1)!}{(t-1)!\,(Q!)^{t-\rho}((Q+1)!)^\rho}.
\]
\end{theorem}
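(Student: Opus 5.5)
We double count pairs $(x,\Omega)$ with $x\in C_t$ and $\Omega$ an oriented cyclic order of $[k]$ for which $x$ is an $\Omega$-interval word, weighting each pair by $|\partial_\Omega x| = t$.

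First, fix $\Omega$. By \Cref{lem:circular-boundaries-disjoint}, the boundary sets $\partial_\Omega x$ of the words $x\in C_t$ that are $\Omega$-interval words are pairwise disjoint subsets of the $k$ cycle-edges. Each such boundary has exactly $t$ edges, so at most $\lfloor k/t\rfloor = Q$ words of $C_t$ are $\Omega$-interval words. Summing over the $(k-1)!$ oriented cyclic orders gives at most $Q(k-1)!$ pairs $(x,\Omega)$.

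Second, fix $x\in C_t$ with multiplicities $a_1,\dots,a_t>0$ (summing to $k$), and count the cyclic orders $\Omega$ for which $x$ is an $\Omega$-interval word. Such an $\Omega$ is determined by a cyclic arrangement of the $t$ level blocks together with a linear order inside each block. Since $\Omega$ is considered up to rotation but not reversal, the $t$ blocks can be arranged cyclically in $(t-1)!$ ways. Inside the blocks there are $\prod a_i!$ choices, and distinct choices give distinct cyclic orders. The cleanest check is to cut the circle at the point where block $1$ begins: this yields a bijection with linear orders that start with block $1$, followed by the remaining blocks in some order. Hence $x$ is an interval word for exactly $(t-1)!\prod_i a_i!$ cyclic orders. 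For $t=1$ the count would differ, but $t\ge2$ here.

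Third, combining the two counts,
\[
\sum_{x\in C_t}(t-1)!\prod_i a_i(x)!\le Q(k-1)!.
\]
As in the proof of \Cref{thm:path-consequences}, for fixed $t$ the product $\prod a_i!$ is minimized when the multiplicities are as equal as possible. With $k=tQ+\rho$ this minimum is $(Q!)^{t-\rho}((Q+1)!)^\rho$. Dividing by this minimum gives the stated bound on $|C_t|$.

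The only delicate point is the bijection count in the second step. The cyclic block arrangement must contribute $(t-1)!$, not $t!$: rotations of $\Omega$ are already identified, so we must avoid overcounting them, while reversals are not identified. Cutting at the start of block $1$ settles this. The bound $\lfloor k/t\rfloor = Q$ in the first step is immediate from the disjointness lemma.
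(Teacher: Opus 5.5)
Your proof is correct and follows the paper's argument: double count pairs $(\Omega,x)$, bound each cyclic order by $Q$ words via the boundary-disjointness lemma, count exactly $(t-1)!\prod_i a_i!$ cyclic orders per word, and minimize the product by balancing the multiplicities. The opening phrase about weighting each pair by $|\partial_\Omega x|=t$ is never used and can be dropped.
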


\begin{proof}
Let
\[
\mathcal I_t=
\{(\Omega,x):\Omega\text{ is an oriented cyclic order of }[k],\
x\in C_t,\text{ and }x\text{ is an }\Omega\text{-interval word}\}.
\]
For fixed $\Omega$, the sets $\partial_\Omega x$ are pairwise disjoint
$t$-subsets of the $k$ cycle-edges, so there are at most $Q=\floor{k/t}$ possible
words $x$.  Hence
\[
\abs{\mathcal I_t}
\le Q(k-1)!.
\]

On the other hand, if the positive multiplicities of $x\in C_t$ are
\[
a_1,\dots,a_t,\qquad a_1+\cdots+a_t=k,
\]
then $x$ is an $\Omega$-interval word for exactly
\[
(t-1)!\prod_{i=1}^t a_i!.
\]
Indeed, choose the cyclic order of the $t$ labelled level sets and then order
the elements inside each level set.
By log-convexity of the factorial function,
\[
\prod_{i=1}^t a_i!\ge (Q!)^{t-\rho}((Q+1)!)^\rho,
\]
and the double count gives the result.
\end{proof}

\subsection{Full-support cliques}

Write $k=bq+r$, where $0\le r<b$ and $q\ge 1$, and let $C_b$ be the
full-support part of a clique $C\subseteq [b]^k$.  Applying
\Cref{thm:circular-support} with $t=b$ gives
\[
\abs{C_b}\le
U_{b,r}(q):=
\frac{q(k-1)!}{(b-1)!(q!)^{b-r}((q+1)!)^r}.
\]
The path bound for $C_b$ alone is $P_{b,r}(q)$, so this improves the
full-support estimate by the factor $bq/k$.

We compare this with the size of the canonical construction.  Let
\[
s_1=\cdots=s_{b-r}=q,\qquad
s_{b-r+1}=\cdots=s_b=q+1
\]
be the canonical multiplicities, and put
\[
R_i=s_i+s_{i+1}+\cdots+s_b.
\]
By \Cref{prop:count-good},
\[
N_b(k)=
\prod_{i=1}^{b-1}
\binom{R_i-1}{s_i-1}
=
\frac{(k-1)!}
{s_b!\prod_{i=1}^{b-1}(s_i-1)!\prod_{i=2}^{b-1}R_i}.
\]
Thus
\[
\frac{U_{b,r}(q)}{N_b(k)}
=
\frac{q}{(b-1)!}
\frac{\prod_{i=2}^{b-1}R_i}{\prod_{i=1}^{b-1}s_i}.
\]
In particular,
\[
U_{b,0}(q)=N_b(k)\qquad\text{when }r=0
\]
and
\[
U_{b,b-1}(q)=N_b(k)\qquad\text{when }r=b-1,
\]
so the circular full-support bound is exact in the divisible residue class and
in the top residue class.  For the remaining residues, the loss is explicit:
if $1\le r\le b-1$, then
\[
\frac{U_{b,r}(q)}{N_b(k)}
=
\frac{r!}{(b-1)!}
\prod_{m=r+1}^{b-1}\left(m+\frac rq\right).
\]
This factor is $1$ when $r=b-1$, is greater than $1$ when $1\le r\le b-2$, and
records exactly how far the circular full-support bound remains from the
construction in the other residue classes.

\subsection{Arbitrary cliques by summing supports}

Using the fixed-support circular bound for each possible support size gives a
bound for unrestricted cliques.  For
$2\le t\le b$, write
\[
k=tQ_t+\rho_t,\qquad 0\le \rho_t<t,
\]
and set
\[
B_t(k)=
\frac{Q_t(k-1)!}{(t-1)!\,(Q_t!)^{t-\rho_t}((Q_t+1)!)^{\rho_t}}.
\]
Then every clique $C\subseteq [b]^k$ satisfies
\[
\abs{C}\le \sum_{t=2}^b B_t(k).
\]
Indeed, a nontrivial clique contains no word of support size one, and
\Cref{thm:circular-support} bounds the contribution of each support size
$t\ge2$; the singleton case is trivial.

This summed circular bound is eventually stronger than the path bound in every
nondivisible residue class: the full-support term is smaller by the factor
$bq/k$, while all smaller-support terms are exponentially smaller than the
full-support term for fixed $b$.  Still, we expect the lower-bound construction
to be the exact answer for all congruence classes once $k$ is large enough.  The
obstacle in this argument is that it treats the support sizes independently and
therefore carries additive lower-support terms.

\subsection{The top residue for \texorpdfstring{$b=3$}{b=3}}

Put $k\equiv 2\pmod 3$ and $q=(k-2)/3$.  Then
\Cref{thm:circular-support} with $t=3$ gives
\[
\abs{C_3}\le
\frac{q(k-1)!}{2q!(q+1)!^2}.
\]
This equals
\[
N_3(k)=\binom{k-1}{q-1}\binom{2q+1}{q},
\]
so (as we have already seen for $k \equiv -1 \pmod b$) 
the canonical construction is sharp on the full-support
 part in this residue
class.

An arbitrary clique $C\subseteq [3]^k$ decomposes as $C=C_2\cup C_3$, since a
support-one word cannot belong to a nontrivial clique.  The summed support bound
would therefore add a separate quantity for $\abs{C_2}$ to the full-support
estimate above.  Since the construction 
itself has full support, one naturally
expects this extra term to be unnecessary.  The following theorem proves
this in the present case: 
for sufficiently large $q$, an arbitrary clique is
already bounded by the full-support quantity above.

The proof is a stability refinement of the same cyclic-order count: a
support-two word is charged against the deficit it forces in the full-support
circular packing.  This is analogous in spirit to Hilton--Milner type
refinements of Erd\H{o}s--Ko--Rado and to later applications of Katona's circle
method \cite{HiltonMilner,FranklKatonaCircle}.

\begin{theorem}\label{thm:b3-top-residue-exact}
Let $k\equiv 2\pmod 3$, and put $q=(k-2)/3$.  If $q\ge 5$, then
\[
m_3(k)=N_3(k).
\]
More precisely, every clique $C\subseteq [3]^k$ satisfies
\[
\abs{C}\le N_3(k),
\]
and if $C_2\ne\varnothing$, then the inequality is strict.
\end{theorem}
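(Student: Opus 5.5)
The plan is a weighted version of the double count in \Cref{thm:circular-support}. Write $C=C_2\cup C_3$. For each oriented cyclic order $\Omega$ of $[k]$, let $a(\Omega)$ and $c(\Omega)$ be the numbers of $\Omega$-interval words in $C_3$ and in $C_2$. By \Cref{lem:circular-boundaries-disjoint}, all their boundaries are pairwise disjoint sets of cycle-edges. Counting edges gives
\[
3a(\Omega)+2c(\Omega)\le k=3q+2.
\]
Only $a(\Omega)\le q$ comes out of this directly, so it loses nothing when $c(\Omega)\ge1$. The first goal is the stronger stability statement
\[
c(\Omega)\ge 1 \;\Longrightarrow\; a(\Omega)\le q-1 .
\]
Given this, we have $a(\Omega)\le q-\mathbf 1[c(\Omega)\ge1]$ for every $\Omega$.

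To prove the stability statement I would first characterize, in terms of the boundaries alone, when two $\Omega$-interval words with disjoint boundaries form a cyclic pair. Since the boundaries are disjoint, every arc of one word meets every arc of the other in a subinterval. The overlap graph is therefore determined by how the boundary points interleave around the circle. The key observations are:
\begin{itemize}
\item If both boundary edges of a support-two word $y$ lie inside a single arc of a word $x$, then $G(x,y)$ is a tree.
\item More generally, $G(x,y)$ has a cycle iff the two boundary sets interleave enough, for two $3$-arc words and for a $3$-arc and a $2$-arc word.
\end{itemize}
Next suppose $a(\Omega)=q$ and $c(\Omega)\ge1$. Then the $3q$ boundary edges of the full-support words, together with the two edges of $y\in C_2$, use at least $3q+2=k$ edges, so every cycle-edge is a boundary edge of exactly one word. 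I would then derive a contradiction from the interleaving requirements. Each full-support word must have $y$'s two boundary edges in different arcs. Also, the $q$ full-support words must pairwise interleave while packing the circle with no free edges. A counting or extremal-position argument should do it: look at the two consecutive boundary edges of $y$, and at a word whose boundary sits between them or adjacent to them, and show some pair fails to be cyclic.

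Once the stability statement holds, I double count as in \Cref{thm:circular-support}. Each $x\in C_3$ is an $\Omega$-interval word for at least $2\,q!\,(q+1)!^2$ orders $\Omega$. Each $y\in C_2$ with multiplicities $a_1+a_2=k$ is an interval word for exactly $a_1!\,a_2!$ orders. Then
\[
2q!(q+1)!^2\,\abs{C_3}\le \sum_\Omega a(\Omega)\le q(k-1)! - \#\{\Omega: c(\Omega)\ge 1\}.
\]
Since $c(\Omega)\le (k)/2$, we also have
\[
\#\{\Omega:c(\Omega)\ge1\}\ge \frac{2}{k}\sum_{y\in C_2} a_1!a_2!.
\]
Dividing by $2q!(q+1)!^2$ and using that $q(k-1)!/(2q!(q+1)!^2)=N_3(k)$ gives
\[
\abs{C_3}+\abs{C_2}\le N_3(k)-\abs{C_2}\Bigl(\frac{\min a_1!a_2!}{k\,q!(q+1)!^2}-1\Bigr).
\]
Here $\min a_1!a_2!\ge \lfloor k/2\rfloor!\lceil k/2\rceil!$, and this exceeds $k\,q!(q+1)!^2$ by a large margin once $q\ge5$; this is a routine factorial estimate to check at $q=5$ and then by monotonicity of the ratio. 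That gives $\abs{C}\le N_3(k)$, with strict inequality when $C_2\ne\varnothing$. Together with \Cref{thm:good-clique}, this gives $m_3(k)=N_3(k)$.

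The main obstacle is the stability statement, i.e.\ the combinatorial classification of cyclic pairs among circular interval words with disjoint boundaries, and showing that a perfect packing of $q$ triples plus one pair is impossible. If that statement fails in some configurations, I would fall back on a weaker deficit that still holds on average. One option is a bounded number of $\Omega$ per $y$ where $a(\Omega)=q$ coexists with $y$. Another is to charge a fraction of each $\Omega$ containing $y$. The huge gap between $a_1!a_2!$ and $q!(q+1)!^2$ leaves plenty of room for such a loss.
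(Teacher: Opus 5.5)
Your argument rests on the local stability claim $c(\Omega)\ge 1\Rightarrow a(\Omega)\le q-1$, and that claim is false.

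Write $\Omega=(v_0,\dots,v_{3q+1})$ with $e_j=\{v_j,v_{j+1}\}$, indices mod $3q+2$. Let $z$ have level sets $A=\{v_0,\dots,v_q\}$ and $B=\{v_{q+1},\dots,v_{3q+1}\}$. For $0\le t\le q-1$ let $x^{(t)}$ have level sets $X^t_1=\{v_{t+1},\dots,v_{t+q+1}\}$, $X^t_2=\{v_{t+q+2},\dots,v_{t+2q+1}\}$ and $X^t_3=\{v_{t+2q+2},\dots,v_{t+3q+2}\}$.

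\begin{itemize}
\item These are balanced $\Omega$-interval words with boundaries $\{e_t,e_{t+q+1},e_{t+2q+1}\}$. Together with $\partial_\Omega z=\{e_q,e_{3q+1}\}$ they partition all $3q+2$ cycle-edges.
\item Each $x^{(t)}$ is cyclic with $z$, since $X^t_1$ and $X^t_3$ both meet $A$ and $B$.
\item For $t<u$, the overlap graph of $x^{(t)},x^{(u)}$ contains the $6$-cycle $X^t_1X^u_1X^t_2X^u_2X^t_3X^u_3$.
\end{itemize}

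So $\{z,x^{(0)},\dots,x^{(q-1)}\}$ is a clique with $a(\Omega)=q$ and $c(\Omega)=1$. In your interleaving language, the triples interleave pairwise as $XYXYXY$ and each separates $e_q$ from $e_{3q+1}$. The extremal-position contradiction you hope for therefore does not exist.

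Your claim does hold when the smaller level set of $z$ has size at most $q$. Then every full-support word cyclic with $z$ must put a boundary edge inside the short arc, which has at most $q-1$ internal edges. But already for $\abs{A}=q+1$ a perfect packing coexists with $z$.

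The missing idea is that such perfect packings, although possible, cannot occur in too many interval orders of $z$. The paper calls $\Omega$ bad if it carries fewer than $q$ full-support interval words of $C$, or carries an unbalanced one.

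\begin{itemize}
\item When $\abs{A}\ge q+1$, a non-bad $\Omega$ forces a unique exact cover of the remaining $3q$ edges by balanced boundary triples.
\item The paper then pairs each interval order $\Omega$ of $z$ with an explicit order $\phi(\Omega)$, obtained by relocating one element of each level set. A partition forced in $\Omega$ and one forced in $\phi(\Omega)$ have a forest as overlap graph, so they cannot both lie in $C$.
\item Hence at least half of the interval orders of $z$ are bad.
\end{itemize}

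This uses the rigidity of balanced triples, so orders carrying unbalanced full-support words must be charged separately. In the paper's notation this is $\sum_\Omega U(\Omega)\le (q+2)E\le (q+2)w\Delta$. Your fallback gestures at a fractional charge but supplies neither the pairing nor this accounting.

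The room is also not ``plenty.'' Put $w=2q!(q+1)!^2$ and $\mu=\floor{k/2}!\ceil{k/2}!$. After losing the factor $2$ and the factor $q+2$, one needs $\mu>2\floor{k/2}(q+2)w$, and at $q=5$ this holds only with ratio $21/20$.
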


The proof is given in \Cref{app:b3-support-two}.

\section{Concluding Remarks and Open Problems}\label{sec:discussion}

The main open question is whether the canonical construction is eventually
optimal for every fixed alphabet size.  In other words, we expect that for each
fixed $b$,
\[
m_b(k)=N_b(k)
\]
for all sufficiently large $k$.  The results above prove this asymptotically,
prove it exactly when $k$ is large and divisible by $b$, and prove it exactly
for $b=3$ when $k\equiv -1\pmod 3$ and $k$ is sufficiently large.

The circular argument suggests a route to the general conjecture.  For
full-support cliques it gives the sharp bound when $k\equiv0,-1\pmod b$, but
the unrestricted bound obtained by summing over support sizes still includes
additive lower-support terms.  We believe these terms are artifacts of treating
the support sizes separately.  The proof for $b=3$ and $k\equiv-1\pmod3$ shows
one possible mechanism: a lower-support word forces enough deficit in the
full-support circular packing to pay for itself.  A natural next step is to
develop such a stability argument for general $b$.

The other residue classes may require a sharper fixed-support argument.  When
$1\le r\le b-2$, the circular full-support bound remains larger than the
construction by an explicit factor, so removing the additive lower-support
terms alone would not prove the conjectured exact value.  It would be useful to
find a refinement of the circular packing, perhaps using additional boundary
information, that closes this remaining gap.

One can try to extend the upper bounds proved here for $m_b(k)$ for
pairs that are $\ell$-pseudo-shattered (for $\ell  \geq 1$) 
in the sense of \cite{HannekeMengMoranShaeiri}.  
In dimension two, this means
that the support graph contains a nonempty subgraph of minimum 
degree at least
$\ell+1$.  Let $m_{b,\ell}(k)$ be the maximum size of a family
$C\subseteq [b]^k$ with this property for every pair of distinct 
words; thus
$m_{b,1}(k)=m_b(k)$.

It is possible to replace the path and cycle permutation packing
arguments used for $\ell=1$ by embedding in other appropriate graphs. 
While this gives some nontrivial bounds they appear to be far from
optimal. It is worth noting that for
$\ell=b-1$, the condition is
qualitative independence, whose sharp asymptotic exponent 
is known to be $2/b$, as proved by Gargano,
K\"orner, and Vaccaro \cite{GarganoKornerVaccaro}.

\appendix
\section{The Support-Two Replacement Argument for \texorpdfstring{$b=3$}{b=3}}\label{app:b3-support-two}

\begin{proof}[Proof of \Cref{thm:b3-top-residue-exact}]
The canonical construction gives the lower bound, so it remains to prove the
upper bound.  Fix a clique $C\subseteq [3]^k$.  If $\abs{C}=1$, the result is
trivial, so assume $\abs{C}>1$.  Then $C$ contains no constant word, because a
constant word cannot form a cycle with any other word.  Thus
\[
C=C_2\cup C_3.
\]

\medskip
\noindent
\emph{The full-support defect.}
Let
\[
w=2q!(q+1)!^2,\qquad N=N_3(k)=\frac{q(k-1)!}{2q!(q+1)!^2}.
\]
For a full-support word $x$, let $p(x)$ be the number of oriented cyclic orders
$\Omega$ for which $x$ is an $\Omega$-interval word.  If the three multiplicities
of $x$ are $a,b,c$, then
\[
p(x)=2a!b!c!.
\]
Indeed, there are $2=(3-1)!$ cyclic orders of the three labelled level sets, and
then the elements inside each level set may be ordered arbitrarily.

The product $a!b!c!$, subject to $a+b+c=k$ and $a,b,c>0$, is minimized at the
balanced multiplicities
\[
(q,q+1,q+1).
\]
Thus every $x\in C_3$ satisfies
\[
p(x)\ge w,
\]
with equality exactly for balanced full-support words.

For a cyclic order $\Omega$, put
\[
a_3(\Omega)=\abs{\{x\in C_3:x\text{ is an }\Omega\text{-interval word}\}}.
\]
By \Cref{lem:circular-boundaries-disjoint}, the boundary triples
$\partial_\Omega x$, for $x\in C_3$ an $\Omega$-interval word, are pairwise
disjoint $3$-subsets of the $k$ cycle-edges.  Thus
\[
a_3(\Omega)\le q.
\]
Summing over all $(k-1)!$ oriented cyclic orders gives
\[
\sum_{x\in C_3}p(x)\le q(k-1)!.
\]
Since $wN=q(k-1)!$, this already gives
\[
\abs{C_3}\le N.
\]

Define
\[
\Delta=N-\abs{C_3}\ge 0,
\qquad
E=\sum_{x\in C_3}(p(x)-w),
\qquad
D(\Omega)=q-a_3(\Omega).
\]
Since $wN=q(k-1)!$, we have
\[
\sum_\Omega D(\Omega)
=q(k-1)!-\sum_{x\in C_3}p(x)
=w\Delta-E.
\]
In particular, $E\le w\Delta$.

Call a full-support word balanced if its three multiplicities are
$q,q+1,q+1$ in some order, and call it unbalanced otherwise.  Set
\[
U(\Omega)=
\abs{\{x\in C_3:x\text{ is unbalanced and }\Omega\text{-interval}\}}.
\]

\begin{lemma}\label{lem:b3-unbalanced-cost}
We have
\[
\sum_\Omega U(\Omega)\le (q+2)E.
\]
Consequently,
\[
\sum_\Omega\bigl(D(\Omega)+U(\Omega)\bigr)\le (q+2)w\Delta.
\]
\end{lemma}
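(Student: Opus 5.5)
The plan is to double count. The sum $\sum_\Omega U(\Omega)$ counts pairs $(\Omega,x)$ with $x\in C_3$ unbalanced and $x$ an $\Omega$-interval word. Hence
\[
\sum_\Omega U(\Omega)=\sum_{x\in C_3\text{ unbalanced}} p(x).
\]
Balanced words contribute $p(x)-w=0$ to $E$ and every term $p(x)-w$ is nonnegative, so $E\ge\sum_{x\text{ unbalanced}}(p(x)-w)$. It therefore suffices to prove the pointwise inequality $p(x)\le (q+2)(p(x)-w)$ for every unbalanced $x\in C_3$. This rearranges to
\[
\frac{p(x)}{w}\ge \frac{q+2}{q+1}.
\]

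The key step is this ratio bound. With multiplicities $(a,b,c)$ we have $p(x)/w=a!b!c!/(q!(q+1)!^2)$. As in the proof of \Cref{thm:path-consequences}, a balancing move $(a_i,a_j)\mapsto(a_i+1,a_j-1)$ with $a_j\ge a_i+2$ multiplies the product of factorials by $(a_i+1)/a_j<1$. Any unbalanced positive triple summing to $k$ can be driven to a balanced one by a finite sequence of such moves, and the product strictly decreases along the way. Take the last unbalanced triple $T$ in this sequence. Its product is at most that of $x$, and $T$ is obtained from a balanced triple $(q,q+1,q+1)$, up to order, by a single reverse move.

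The reverse moves are easy to list. Moving a unit from the $q$ entry to a $q+1$ entry gives $(q-1,q+1,q+2)$, with ratio $(q+2)/q$. Moving a unit between the two $q+1$ entries gives $(q,q,q+2)$, with ratio $(q+2)/(q+1)$. Moving a unit from a $q+1$ entry to the $q$ entry only permutes the balanced triple. Both ratios are at least $(q+2)/(q+1)$, which gives the pointwise bound and hence $\sum_\Omega U(\Omega)\le (q+2)E$. The only delicate point, and the step I expect to need care, is the claim that the final step into a balanced triple is one of these moves. This holds because each move changes two coordinates by one, so the predecessor of a balanced triple is a balanced triple perturbed by a single unit transfer.

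For the consequence, combine the identity $\sum_\Omega D(\Omega)=w\Delta-E$ with the bound just proved:
\[
\sum_\Omega\bigl(D(\Omega)+U(\Omega)\bigr)\le w\Delta-E+(q+2)E=w\Delta+(q+1)E.
\]
Finally use $E\le w\Delta$ (noted before the lemma) to get at most $(q+2)w\Delta$.
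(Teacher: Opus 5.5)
Your proposal is correct and follows the paper's proof. It uses the same double count $\sum_\Omega U(\Omega)=\sum_{x\text{ unbalanced}}p(x)$, the same pointwise bound $p(x)\ge w\frac{q+2}{q+1}$ for unbalanced words, and the same combination with $\sum_\Omega D(\Omega)=w\Delta-E$ and $E\le w\Delta$. Your ``last unbalanced triple'' smoothing argument is slightly more careful than the paper's one-line justification that $(q,q,q+2)$ minimizes the product over unbalanced triples: a move between parts differing by $3$ can land on a balanced triple (e.g.\ from $(q-1,q+1,q+2)$), and your enumeration of single-unit transfers handles that case explicitly.
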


\begin{proof}
Among all unbalanced triples of positive integers with sum $k$, the product
of factorials is minimized at $(q,q,q+2)$: if two parts differ by at least $3$,
moving one unit from the larger part to the smaller part decreases the product.
Therefore every unbalanced $x\in C_3$ satisfies
\[
p(x)\ge 2q!q!(q+2)!=w\frac{q+2}{q+1}.
\]
Hence
\[
p(x)\le (q+2)(p(x)-w).
\]
Summing over the unbalanced full-support words gives
\[
\sum_\Omega U(\Omega)\le (q+2)E.
\]
Combining this with $\sum_\Omega D(\Omega)=w\Delta-E$ and $E\le w\Delta$ gives
\[
\sum_\Omega\bigl(D(\Omega)+U(\Omega)\bigr)
\le (w\Delta-E)+(q+2)E
=w\Delta+(q+1)E
\le (q+2)w\Delta.
\]
\end{proof}

\medskip
\noindent
\emph{A support-two word forces many bad cyclic orders.}
Now fix $z\in C_2$.  Let its two nonempty level sets be $A$ and $B$, with
\[
\abs{A}=a\le \abs{B}.
\]
Let $p(z)$ be the number of oriented cyclic orders for which $z$ is an interval
word.  Then
\[
p(z)=a!(k-a)!.
\]
In particular,
\[
p(z)\ge \mu,
\qquad
\mu=\floor{k/2}!\ceil{k/2}!.
\]
Call a cyclic order $\Omega$ \emph{bad for $z$} if $z$ is an $\Omega$-interval
word and
\[
D(\Omega)\ge 1\qquad\text{or}\qquad U(\Omega)\ge 1.
\]

\begin{proposition}\label{prop:b3-half-bad}
For every $z\in C_2$,
\[
\abs{\{\Omega:\Omega\text{ is bad for }z\}}\ge \frac{p(z)}2.
\]
\end{proposition}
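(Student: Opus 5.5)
The plan is to compare, for a fixed $z\in C_2$, the cyclic orders in which $z$ is an interval word that are \emph{good} (not bad) with those that are bad, using a pairing on the set of $p(z)=a!(k-a)!$ such orders. The aim is to show that no pair consists of two good orders.

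\emph{Step 1: structure of a good order.} Fix $\Omega$ in which $z$ is an interval word, so $A$ and $B$ are complementary cyclic arcs and $\partial_\Omega z=\{e,f\}$ consists of the two edges joining them. The proof of \Cref{lem:circular-boundaries-disjoint} never used the support size, so $\partial_\Omega z$ is disjoint from $\partial_\Omega x$ for every $\Omega$-interval word $x\in C_3$. If $\Omega$ is good, then $a_3(\Omega)=q$ and all these $q$ words are balanced. Their boundary triples are pairwise disjoint and avoid $e,f$, so they use $3q$ of the remaining $k-2=3q$ edges. Hence every cycle-edge other than $e,f$ is a boundary edge of exactly one of the $q$ words, and $e,f$ are boundary edges of none of them. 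In particular, for each of the $q$ words, the two endpoints of $e$ lie in a common level set, and likewise for $f$.

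\emph{Step 2: the pairing.} For an order $\Omega$ in which $z$ is interval, let $\Omega^\ast$ be obtained by reversing the internal order of the arc $A$ while keeping $B$ and the orientation otherwise unchanged. This is an involution on the $p(z)$ orders for $z$. It is fixed-point-free when $a\ge 2$, and $a\ge 2$ holds: $a=1$ would make $z$ nearly constant, and one checks directly that then $z$ cannot form a cycle with any full-support word. The effect of $\Omega\mapsto\Omega^\ast$ is to swap which elements of $A$ sit next to $e$ and next to $f$. If both $\Omega$ and $\Omega^\ast$ were good, apply Step 1 to both. Every edge inside $A$ and the edges $e,f$ are rigidly prescribed as boundary or non-boundary for the words of $C_3$ in both orders, and reversing $A$ maps internal edges of $A$ to internal edges of $A$. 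So the same family of level-set arcs must tile both orders. Comparing the level set that straddles $e$ in $\Omega$ with the one that straddles $e$ in $\Omega^\ast$, one element of $A$ (the old first element, now adjacent to $f$) must lie in the level sets straddling both $e$ and $f$. The aim is to derive from this a contradiction with the interval structure, or with a cycle requirement between $z$ and some balanced word. The natural route is to note that the word $x$ straddling $e$ and $f$ would then have a level set containing both ends of $A$; this forces $G(z,x)$ to be acyclic, since $x$'s level sets would nest inside $A$ or $B$ except for one set containing both cut points, giving a tree. That contradicts $C$ being a clique. Once at most one order in each pair is good, the bad orders number at least $p(z)/2$.

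\emph{Main obstacle.} The delicate point is Step 2: verifying rigorously that the tilings in $\Omega$ and $\Omega^\ast$ are incompatible, and in particular handling the degenerate situations. These include the case where a single balanced word has a level set containing all of $A$, and the case where the straddling word is the same at $e$ and $f$. If reversal of $A$ does not yield a clean contradiction in all cases, the fallback is to replace the involution by the action of the $a$ cyclic rotations of the internal order of $A$ (or of $B$). One would then show, using the rigid tiling from Step 1 and the balanced arc lengths $q,q+1,q+1$, that within each rotation orbit at most half of the orders can be good. This uses the fact that shifting the cut by one position changes which boundary edges lie inside $A$ modulo the period imposed by the tiling.
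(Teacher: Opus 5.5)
Your Step 1 is correct, but the central claim of Step 2 is false: $\Omega$ and $\Omega^\ast$ (with $A$ reversed) can both be good, so the pairing yields nothing. List $\Omega$ as $\alpha_1,\dots,\alpha_a,\beta_1,\dots,\beta_{k-a}$. Then $e=\{\alpha_a,\beta_1\}$ and $f=\{\beta_{k-a},\alpha_1\}$, while in $\Omega^\ast$ the cut edges are $\{\alpha_1,\beta_1\}$ and $\{\beta_{k-a},\alpha_a\}$.

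In a good $\Omega$, each $\Omega$-interval $x\in C_3$ has a block $X_e\supseteq\{\alpha_a,\beta_1\}$ and a \emph{different} block $X_f\supseteq\{\beta_{k-a},\alpha_1\}$; otherwise $G(z,x)$ would be acyclic. Likewise, in a good $\Omega^\ast$, each $\Omega^\ast$-interval $y\in C_3$ has distinct blocks $Y_e\supseteq\{\alpha_1,\beta_1\}$ and $Y_f\supseteq\{\beta_{k-a},\alpha_a\}$. The elements $\beta_1,\alpha_1,\beta_{k-a},\alpha_a$ then witness the $4$-cycle $X_eY_eX_fY_f$ in $G(x,y)$. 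The $q$ tiling words of a good order have pairwise interleaved boundary triples, so they are pairwise cyclic, and each is cyclic with $z$. Hence $z$ together with the forced tiling words of $\Omega$ and those of $\Omega^\ast$ is a genuine clique in which neither order is bad.

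The faulty step is your assertion that ``the same family of level-set arcs must tile both orders.'' The two tilings consist of different words. The contradiction you hope for, a word with a level set containing both ends of $A$, never occurs, because every tiling word of a good order separates the two ends of $A$. Your fallback is only asserted, and its $A$-version provably fails as well. For $a=q+1$, a direct check shows that the tilings forced by all $a$ rotations of $A$ (with $B$ fixed) are mutually cyclic, so an entire rotation orbit can consist of good orders.

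What is missing is a mechanism that actually produces a non-cyclic pair. When $a\le q$, no pairing is needed. Any $x\in C_3$ cyclic with $z$ has a boundary edge strictly inside the $A$-arc, and there are only $a-1\le q-1$ such edges. Since boundaries are disjoint, $D(\Omega)\ge1$ for every interval order of $z$.

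When $a=q+s\ge q+1$, you need the exact-cover computation showing that a good order forces a \emph{unique} tiling by balanced triples. With cut edges $q+s-1$ and $3q+1$, the selected indices are $\{0,\dots,s-2\}\cup\{q+s,\dots,2q\}$. You then need a bijection that rearranges \emph{both} arcs at once:
\begin{itemize}
\item for $s=1$, $[A_0,a_\ast,V,U,b_\ast]\mapsto[A_0,b_\ast,V,U,a_\ast]$, which rotates each arc by one step;
\item for $s\ge2$, $[U_A,a_0,A',B',U_B,b_0]\mapsto[U_A,A',B',b_0,U_B,a_0]$.
\end{itemize}
These maps are chosen so that one specific tiling word of $\Omega$ and one of its image have a path as overlap graph, contradicting the clique property. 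A bijection, not necessarily an involution, suffices here, since it maps good orders injectively into bad ones.
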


\begin{proof}
We split into two cases.

\medskip
\noindent
\textbf{Case 1: $a\le q$.}
Let $\Omega$ be any cyclic order in which $z$ is an interval word.  The two
boundary edges of $z$ split the circle into two arcs, one corresponding to $A$
and the other to $B$.  The $A$-arc has only $a-1\le q-1$ internal cycle-edges.

Let $x\in C_3$ be an $\Omega$-interval word.  If $x$ is cyclic with $z$, then at
least two level sets of $x$ must meet both $A$ and $B$; otherwise the support
graph $G(z,x)$, which has only two vertices on the $z$-side, cannot contain a
cycle.  Therefore $\partial_\Omega x$ must contain an internal edge of the
$A$-arc.

By \Cref{lem:circular-boundaries-disjoint}, these boundary edges are distinct
for distinct $x$.  Hence at most $a-1\le q-1$ full-support words of $C$ can be
$\Omega$-interval words.  Thus $D(\Omega)\ge1$, so every interval order of $z$
is bad.

\medskip
\noindent
\textbf{Case 2: $a\ge q+1$.}
Write
\[
a=q+s,\qquad 1\le s\le q.
\]
The upper bound follows from $a\le \abs{B}=3q+2-a$.

We first record an exact-cover calculation.  Index the cycle-edges by
\[
0,1,\dots,3q+1\pmod{3q+2},
\]
and suppose the two boundary edges of $z$ are
\[
q+s-1\qquad\text{and}\qquad 3q+1.
\]
A balanced full-support interval word has block sizes $q,q+1,q+1$.  Therefore
its boundary triple has the form
\[
T_i=\{i,\ i+q,\ i+2q+1\}\pmod{3q+2}
\]
for a unique $i$.

Suppose $q$ such triples are pairwise disjoint and avoid the two boundary edges
of $z$.  Since they use $3q$ edges, they cover every edge except
\[
q+s-1,\qquad 3q+1.
\]
Let $f_i\in\{0,1\}$ indicate whether $T_i$ is selected.  Each edge $j$ belongs
exactly to the three triples $T_j,T_{j-q},T_{j-2q-1}$, so
\[
f_j+f_{j-q}+f_{j-2q-1}
=
\begin{cases}
0,& j=q+s-1\text{ or }j=3q+1,\\
1,& \text{otherwise},
\end{cases}
\tag{*}
\]
with all indices taken modulo $3q+2$.  The equation at $3q+1$ gives
\[
f_{3q+1}=f_{2q+1}=f_q=0
\]
and the equation at $q+s-1$ gives
\[
f_{q+s-1}=f_{s-1}=f_{2q+s}=0.
\]
Starting from these zeros and applying $(*)$ successively along the two chains
gives
\[
f_0=f_1=\cdots=f_{s-2}=1
\]
and
\[
f_{q+s}=f_{q+s+1}=\cdots=f_{2q}=1,
\]
while all other $f_i$'s are forced to be $0$.  Thus the selected indices are
exactly
\[
\{0,1,\dots,s-2\}\cup \{q+s,q+s+1,\dots,2q\}.
\tag{1}
\]

We now pair the interval orders of $z$.  A display
\[
[X_1,X_2,\dots,X_m]
\]
means the cyclic order whose consecutive blocks are $X_1,X_2,\dots,X_m$, with
the inherited internal orders.

\medskip
\noindent
\emph{Subcase $s=1$.}
Here $\abs{A}=q+1$ and $\abs{B}=2q+1$.  Every interval cyclic order of $z$ can
be uniquely written as
\[
\Omega=[A_0,a_\ast,V,U,b_\ast],
\]
where
\[
\abs{A_0}=q,\qquad \abs{U}=\abs{V}=q,
\]
$a_\ast\in A$, and $b_\ast\in B$.  The $A$-block is $A_0,a_\ast$, and the
$B$-block is $V,U,b_\ast$.

Define a bijection on interval orders of $z$ by
\[
\phi(\Omega)=[A_0,b_\ast,V,U,a_\ast].
\]
The image still has $A$ and $B$ as cyclic intervals: the $A$-block wraps around
the end of the displayed cyclic order.

We claim that $\Omega$ and $\phi(\Omega)$ cannot both be not bad.  Suppose they
were both not bad.  Then in each order there are exactly $q$ full-support
interval words of $C$, all balanced.  By the exact-cover calculation above, for
\[
\Omega=[A_0,a_\ast,V,U,b_\ast],
\]
the forced family of boundary triples includes the triple whose blocks are
\[
P=\{\,U,\ A_0\cup\{b_\ast\},\ \{a_\ast\}\cup V\,\}.
\]
Thus $P$ is the level-set partition of some word of $C_3$.

Similarly, after
rotating
\[
\phi(\Omega)=[A_0,b_\ast,V,U,a_\ast]
\]
to the display
\[
[a_\ast,A_0,b_\ast,V,U],
\]
the exact-cover calculation forces the partition
\[
Q=\{\,V,\ \{a_\ast\}\cup U,\ A_0\cup\{b_\ast\}\,\}
\]
to occur in $C_3$.

But the overlap graph of $P$ and $Q$ is a forest.  Indeed, the common block
$A_0\cup\{b_\ast\}$ gives an isolated edge component, and the remaining nonzero
intersections form the path
\[
U
-
(\{a_\ast\}\cup U)
-
(\{a_\ast\}\cup V)
-
V.
\]
Therefore the corresponding two words are not cyclic, contradicting that $C$ is
a clique.  Hence at least one of $\Omega,\phi(\Omega)$ is bad.  Since $\phi$ is
a bijection, at least half of the interval orders of $z$ are bad.

\medskip
\noindent
\emph{Subcase $s\ge 2$.}
Every interval cyclic order of $z$ can be uniquely written as
\[
\Omega=[U_A,a_0,A',B',U_B,b_0],
\]
where
\[
\abs{U_A}=q,\qquad \abs{A'}=s-1,\qquad
\abs{U_B}=q,\qquad \abs{B'}=q+1-s.
\]
The $A$-block is $U_A,a_0,A'$, and the $B$-block is $B',U_B,b_0$.

Define a bijection by
\[
\phi(\Omega)=[U_A,A',B',b_0,U_B,a_0].
\]
Again, the image has $A$ and $B$ as cyclic intervals, with the $A$-block
wrapping around the displayed end.

Suppose $\Omega$ and $\phi(\Omega)$ are both not bad.  For
\[
\Omega=[U_A,a_0,A',B',U_B,b_0],
\]
the exact-cover calculation forces the partition
\[
P=\{\,U_B,\ U_A\cup\{b_0\},\ \{a_0\}\cup A'\cup B'\,\}
\]
to occur in $C_3$.

For $\phi(\Omega)$, rotate the display to
\[
[a_0,U_A,A',B',b_0,U_B].
\]
The exact-cover calculation then forces
\[
Q=\{\,U_A,\ \{a_0\}\cup U_B,\ A'\cup B'\cup\{b_0\}\,\}
\]
to occur in $C_3$.

The overlap graph of $P$ and $Q$ is the path
\[
U_B
-
(\{a_0\}\cup U_B)
-
(\{a_0\}\cup A'\cup B')
-
(A'\cup B'\cup\{b_0\})
-
(U_A\cup\{b_0\})
-
U_A.
\]
Hence the two corresponding words are not cyclic, contradiction.  Therefore at
least one of $\Omega,\phi(\Omega)$ is bad.  Since $\phi$ is a bijection, at
least half of the interval orders of $z$ are bad.

This completes the proof of the proposition.
\end{proof}

\medskip
\noindent
\emph{Counting bad incidences.}
Let
\[
\mathcal I=\{(z,\Omega):z\in C_2,\ \Omega\text{ is bad for }z\}.
\]
By \Cref{prop:b3-half-bad},
\[
\abs{\mathcal I}\ge \abs{C_2}\frac{\mu}{2}.
\]

We now upper-bound $\abs{\mathcal I}$.  Fix a cyclic order $\Omega$.  The
support-two words in $C_2$ that are $\Omega$-interval words have pairwise
disjoint boundary pairs, by \Cref{lem:circular-boundaries-disjoint}.  Therefore
there are at most
\[
M=\floor{k/2}
\]
of them.  Moreover, if $\Omega$ is bad for some $z$, then
\[
D(\Omega)+U(\Omega)\ge1.
\]
Thus, using \Cref{lem:b3-unbalanced-cost},
\[
\abs{\mathcal I}
\le M\sum_\Omega\bigl(D(\Omega)+U(\Omega)\bigr)
\le M(q+2)w\Delta.
\]
Consequently
\[
\abs{C_2}\le \frac{2M(q+2)w}{\mu}\,\Delta.
\]

\medskip
\noindent
\emph{The final numerical estimate.}
For $q\ge 5$ we have
\[
\mu>2M(q+2)w.
\]
Indeed, this is equivalent to
\[
R_q:=
\frac{\floor{(3q+2)/2}!\ceil{(3q+2)/2}!}
{4\floor{(3q+2)/2}(q+2)q!(q+1)!^2}
>1.
\]
Direct computation gives $R_5=21/20$ and $R_6=9/4$.  If
\[
n=\floor{(3q+2)/2},\qquad m=\ceil{(3q+2)/2},
\]
then replacing $q$ by $q+2$ increases both $n$ and $m$ by $3$, and cancellation
gives
\[
\frac{R_{q+2}}{R_q}
=
\frac{n(n+1)(n+2)(m+1)(m+2)(m+3)}
{(q+1)(q+2)^2(q+3)^2(q+4)}.
\]
For $q\ge 5$ each numerator factor is larger than the corresponding denominator
factor, so $R_{q+2}>R_q$.  Thus $R_q>1$ for all $q\ge 5$.

It follows that $\abs{C_2}<\Delta$ whenever $\Delta>0$, while if $\Delta=0$ then
$\abs{C_2}=0$.  Hence
\[
\abs{C}=\abs{C_3}+\abs{C_2}=N-\Delta+\abs{C_2}\le N.
\]
If $C_2\ne\varnothing$, then $\Delta>0$ and $\abs{C_2}<\Delta$, so the inequality is
strict.  The canonical construction gives the matching lower bound.
\end{proof}
\vspace{0.2cm}

\noindent
{\bf Acknowledgment:}\, Some of the proofs in this paper were found with the
help of ChatGPT 5.5 Pro and were subsequently verified by the authors.

\end{document}